\newtheorem{theorem}{Theorem}[section]
\theoremstyle{definition}
\newtheorem{definition}[theorem]{Definition}
\newtheorem{remark}[theorem]{Remark}
\newtheorem{example}[theorem]{Example}
\numberwithin{equation}{section}
\begin{document}
\title[]{Coloring invariants for links in $\Sigma_g\times S^1$}%
\author{Zhiyun Cheng}
\author{Hongzhu Gao}
\address{School of Mathematical Sciences, Beijing Normal University, Beijing 100875, China}
\address{School of Mathematical Sciences, Beijing Normal University, Beijing 100875, China}
\email{czy@bnu.edu.cn}
\email{hzgao@bnu.edu.cn}
\subjclass[2020]{57K10, 57K12}
\keywords{biquandle, biquandle cocycle invariant}

\begin{abstract}
Let $\Sigma_g$ be a closed oriented surface of genus $g$, in this paper we discuss how to define coloring invariants and its generalizations for links in $\Sigma_g\times S^1$.
\end{abstract}
\maketitle

\section{Introduction}\label{section1}
The notion of quandle was independently introduced by Joyce \cite{Joy1982} and Matveev \cite{Mat1982} in 1982. As an analogue of knot group, one can define the knot quandle (also called the fundamental quandle) $Q_K$ of a knot $K$. It is well known that two knots $K_1, K_2$ have isomorphic knot quandles if and only if $K_1=K_2$ or $K_1=rm(K_2)$, here $rm(K_2)$ denotes the mirror image of $K_2$ with its orientation reversed. For a given finite quandle $Q$, a homomorphism from $Q_K$ to $Q$ has an interpretation as colorings of knot diagrams of $K$ by elements of $Q$. The number of homomorphisms gives rise to a knot invariant, called the coloring invariant of $K$ with respect to $Q$ and usually denoted by Col$_Q(K)$. Quandle idea has attracted considerable interest and has been actively developed during the past forty years. For example, the homology theory of a rack and the cohomology theory of a quandle were introduced in \cite{FR1992} and \cite{CJKLS2003}, respectively. An enhanced version of Col$_Q(K)$, say the quandle cocycle invariant, can be derived from a quandle 2-cocycle.

As a generalization of quandle, which is a set equipped with a binary operation, a biquandle \cite{FJK2004} is a set equipped with two binary operations. Similar to the quandle coloring invariant, for a given finite biquandle $X$, one can assign elements of $X$ to semi-arcs of knot diagrams to define the biquandle coloring invariant Col$_{X}(K)$. It is worth pointing out that for classical knots, the knot quandle and the knot biquandle contain the same information \cite{Ish2020}. 

Let $\Sigma_g$ denote a closed oriented surface of genus $g$, the main aim of this paper is to define coloring invariants for links in $\Sigma_g\times S^1$. Each link in $\Sigma_g\times S^1$ admits an arrowed diagram on $\Sigma_g$. Two arrowed diagrams represent the same link if and only if they are related by a sequence of generalized Reidemeister moves \cite{DM2009}. For a given finite biquandle $X$ and a special automorphism $f\in\text{Aut}(X)$, we introduce a coloring which assigns elements of $X$ to semi-arcs of arrowed diagrams. It turns out that the number of such colorings provides a nonnegative integer-valued invariant for links in $\Sigma_g\times S^1$.

The article is organized as follows. In Section \ref{section2}, we give a quick review of the notion of biquandle and arrowed diagrams of links in $\Sigma_g\times S^1$. In Section \ref{section3}, we explain how to define the coloring of arrowed diagrams if we are given a finite biquandle and a special automorphism $f$. The proof of the invariance of the coloring number is also given in this section. The last section is devoted to discuss some generalizations of these coloring invariants.

\section{Biquandle and knots in $\Sigma_g\times S^1$}\label{section2}
\begin{definition}\label{definition}
A \emph{biquandle} is a nonempty set $X$ with two binary operations $\circ: X\times X\to X$ and $\ast: X\times X\to X$ satisfying the following three axioms:
\begin{enumerate}
\item $\forall x\in X, x\ast x=x\circ x$;
\item $\forall x, y\in X$, there are unique $z, w\in X$ such that $z\ast x=y$ and $w\circ x=y$, and the map $S: (x, y)\rightarrow(y\circ x, x\ast y)$ is invertible;
\item $\forall x, y, z\in X$, we have
\begin{center}
$(z\circ y)\circ(x\ast y)=(z\circ x)\circ(y\circ x)$,\\$(y\circ x)\ast(z\circ x)=(y\ast z)\circ(x\ast z)$,\\$(x\ast y)\ast(z\circ y)=(x\ast z)\ast(y\ast z)$.
\end{center}
\end{enumerate}
\end{definition}

A biquandle $X$ reduces to a \emph{quandle} if $x\circ y=x$ for any $x, y\in X$. Here we list some examples of biquandle.

\begin{example}
Every set $X$ equipped with $x\ast y=x\circ y=x$ is a biquandle. More generally, for a fixed bijection $\sigma: X\to X$, $X$ under $x\ast y=x\circ y=\sigma(x)$ is a biquandle.
\end{example}

\begin{example}
Let $R=\mathbb{Z}[t^{\pm1},s^{\pm1}]$, then $R$ is a biquandle if we define $x\ast y=tx+(s-t)y$ and $x\circ y=sx$. In general, any module $X$ over $R$ is a biquandle.
\end{example}

\begin{example}\label{example2.4}
Let $X=\{1, 2, 3\}$ and the two operations $\ast: X\times X\to X$ and $\circ: X\times X\to X$ are expressed as a $3\times 6$ matrix as follows.
\begin{center}
$\left(
\begin{array}{cccccc}
    2 & 2 & 1 & 2 & 2 & 1 \\
    1 & 1 & 2 & 1 & 1 & 2 \\
    3 & 3 & 3 & 3 & 3 & 3 \\
\end{array}
\right)$
\end{center}
Here the entry in row $x$ column $y$ denotes $x\ast y$ and the entry in row $x$ column $y+3$ denotes $x\circ y$ $(1\leq x, y\leq3)$.
\end{example}

Suppose we have two biquandles $(X, \ast_X, \circ_X)$ and $(Y, \ast_Y, \circ_Y)$, a map $f: X\to Y$ is called a \emph{homomorphism} if for any $x_1, x_2\in X$
\begin{center}
$f(x_1\ast_X x_2)=f(x_1)\ast_Yf(x_2)$ and $f(x_1\circ_X x_2)=f(x_1)\circ_Yf(x_2)$.
\end{center}
If a homomorphism is a bijection, then we say it is an \emph{isomorphism}. An isomorphism from $X$ to itself is called an \emph{automorphism} of $X$. We denote the set of all automorphisms of $X$ by Aut$(X)$.

Suppose we are given a finite biquandle $X$ and an oriented knot $K$ represented by an oriented knot diagram $D$. By a semi-arc, we mean a part of $D$ between two adjacent crossing points. We can assign each semi-arc of $D$ with an element of $X$, such that locally at each crossing point the relations in Figure \ref{figure1} are satisfied. Each assignment of $D$ with $X$ is called a \emph{coloring}. The three axioms in Definition \ref{definition} guarantee that the number of colorings Col$_X(D)$ is invariant under the three Reidemeister moves, therefore we obtain an integer-valued knot invariant Col$_X(K)$.

\begin{figure}[h]
\centering
\includegraphics{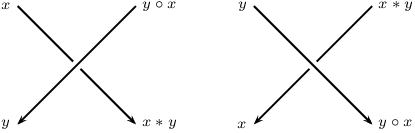}\\
\caption{The coloring rule of a biquandle}\label{figure1}
\end{figure}

Now we turn to the knot theory with ambient space $\Sigma_g\times S^1$. Let us consider $\Sigma_g\times S^1$ as the 3-manifold obtained from $\Sigma_g\times[0, 1]$ by identifying $\Sigma_g\times\{0\}$ and $\Sigma_g\times\{1\}$ with an identity. Let $L$ be an oriented link in $\Sigma_g\times S^1$, by a slight deformation if necessary, we can assume that $L$ is transverse to $\Sigma_g\times\{0\}$ and its projection $D$ in $\Sigma_g\times\{0\}$ has a finite number of double points. Each intersection point between $L$ and $\Sigma_g\times\{0\}$ corresponds to a dot in $D$. We assign an arrow to each dot (do not confuse it with the orientation of each component), which is directed from $\Sigma_g\times\{-\varepsilon\}(=\Sigma_g\times\{1-\varepsilon\})$ to $\Sigma_g\times\{+\varepsilon\}$. See Figure \ref{figure2} for an example. Finally, we obtain a link diagram with a finite number of arrows in $\Sigma_g\times\{0\}$, which is called an \emph{arrowed diagram} of $L$.

\begin{figure}[h]
\centering
\includegraphics{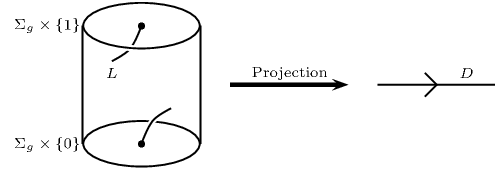}\\
\caption{Assigning an arrow to a dot}\label{figure2}
\end{figure}

A complete description of isotopy classes of links in $\Sigma_g\times S^1$ via arrowed diagrams was given by Dabkowski and Mroczkowski.

\begin{theorem}[\cite{DM2009}]
Two arrowed diagrams of links in $\Sigma_g\times S^1$ represent the same link if and only if they are related by a finite sequence of Reidemeister moves and the moves $\Omega_4$ and $\Omega_5$.
\end{theorem}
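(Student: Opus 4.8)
The plan is to prove the two directions separately, treating the statement as a Reidemeister-type classification adapted to the closed fiber $S^1$. Throughout I write $S^1=[0,1]/(0\sim1)$ and regard $\Sigma_g\times(0,1)$ as an ordinary thickened surface, so that an arrowed diagram records two pieces of data: the underlying surface diagram obtained from the vertical projection $p\colon\Sigma_g\times S^1\to\Sigma_g$, together with the arrows marking the transversal intersections of $L$ with the cut surface $\Sigma_g\times\{0\}=\Sigma_g\times\{1\}$.

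For the sufficiency (``if'') direction I would check move by move that each listed modification is induced by an ambient isotopy of $\Sigma_g\times S^1$. The three Reidemeister moves are realized by isotopies supported in a small ball disjoint from the cut, exactly as in the classical case, so this part follows from the Reidemeister theorem for diagrams in a thickened surface. The moves $\Omega_4$ and $\Omega_5$ are realized by isotopies that push a small finger of $L$ across the identification surface $\Sigma_g\times\{0\}=\Sigma_g\times\{1\}$; I would draw the explicit movie of each such isotopy and read off that it produces precisely the claimed change of arrows, namely the creation or annihilation of a pair of arrows, the sliding of an arrow, and the passage of an arrow through a crossing. This direction is routine.

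For the necessity (``only if'') direction, suppose $L_0$ and $L_1$ are ambiently isotopic with arrowed diagrams $D_0$ and $D_1$. I would choose an ambient isotopy $\{L_t\}_{t\in[0,1]}$ and, after a small perturbation, make the associated family generic with respect to both the projection $p$ and the position relative to the cut surface $\Sigma_g\times\{0\}$. By multijet transversality the combinatorial type of $D_t$ is then locally constant away from finitely many critical times $t_1<\cdots<t_n$, at each of which exactly one codimension-one event occurs. These events split into two classes: those disjoint from the cut surface, which are the standard singularities of a generic family of surface diagrams (a cusp, a strand tangency, a triple point) and yield $R1$, $R2$, $R3$; and those interacting with the cut surface, which are a strand passing transversally through the cut and a projection crossing being pushed across the cut. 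I would verify that the latter two are exactly the local pictures defining $\Omega_4$ and $\Omega_5$, so that concatenating the moves read off at $t_1,\dots,t_n$ realizes the passage from $D_0$ to $D_1$.

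The main obstacle is the completeness of this classification: proving that no other codimension-one transition can occur, and that the cut-surface events are genuinely modeled by $\Omega_4$ and $\Omega_5$. This needs a careful transversality setup for the pair consisting of $p$ and the cut-surface position, arranged so that projection singularities and cut-crossings never coincide and so that each cut-crossing is separated from the ordinary crossings of the diagram; this is delicate precisely because the cut $\Sigma_g\times\{0\}$ is a closed nontrivial surface rather than a coordinate hyperplane. The nontrivial topology of $\Sigma_g$ affects only the global placement of arcs, not the local normal forms, so it does not enlarge the list of moves; the real work is the simultaneous bookkeeping of the genericity of the diagram and of the arrows.
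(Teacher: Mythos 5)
You are trying to prove a statement that the paper itself does not prove: it is imported verbatim from Dabkowski--Mroczkowski \cite{DM2009}, and the article under review contains no argument for it at all. So there is no internal proof to compare against; the only question is whether your proposal would stand on its own.

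Your outline is the standard one for Reidemeister-type theorems (sufficiency by realizing each move as a local isotopy; necessity by making a generic isotopy transverse to both the projection $p\colon\Sigma_g\times S^1\to\Sigma_g$ and the cut surface $\Sigma_g\times\{0\}$, then reading off codimension-one events), and it is almost certainly the right skeleton. But as written it has a genuine gap, which you yourself flag: the entire content of the theorem is the \emph{completeness} of the list of local events, and your proposal defers exactly that step (``the main obstacle is the completeness of this classification''). Asserting that the only cut-surface events are a tangency of a strand with $\Sigma_g\times\{0\}$ (yielding $\Omega_4$, the birth/death of a pair of oppositely directed arrows) and the collision of an arrow with a crossing (yielding $\Omega_5$) requires an explicit jet-transversality argument for the map $(p,q)$, where $q$ is the $S^1$-coordinate, showing that in a generic one-parameter family the singularities of $p|_{L_t}$ and the non-transversality of $q|_{L_t}$ at the value $0$ never occur simultaneously and each occurs in the stated normal form. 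Without that, the ``only if'' direction is a plausibility argument, not a proof. A second point you should make explicit: an isotopy that drags an entire component once around the $S^1$ factor passes through a moment where the component is not transverse to the cut along a whole arc; you need to perturb this to a finger move (one $\Omega_4$ creating an arrow pair, followed by $\Omega_5$ moves sweeping the arrows around the component, followed by a cancelling $\Omega_4$), and checking that such global events decompose into the local moves is part of the completeness argument, not a consequence of it. For a full proof you would either carry out this transversality analysis or reduce to a known statement (e.g.\ the Reidemeister theorem for links in thickened surfaces applied to $\Sigma_g\times(0,1)$, plus a separate analysis of interactions with the cut), which is essentially what \cite{DM2009} does.
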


\begin{figure}[h]
\centering
\includegraphics{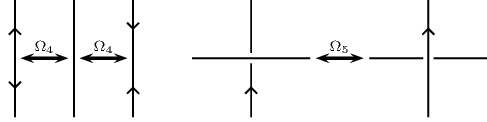}\\
\caption{Local move $\Omega_4$ and $\Omega_5$}\label{figure3}
\end{figure}

\section{Coloring invariants for links in $\Sigma_g\times S^1$}\label{section3}
In this section, we discuss how to define biquandle coloring invariants for knots in $\Sigma_g\times S^1$. The major challenge is how to deal with the arrows in arrowed diagrams. In our approach to this problem, we associate a special biquandle automorphism to each arrow. Let $X$ be a finite biquandle, $f$ be an automorphism of $X$, and $D$ an arrowed diagram representing a link $L$ in $\Sigma_g\times S^1$. By a \emph{semi-arc} of $D$, we mean a portion of $D$ between two adjacent crossing points, or an arrow and an adjacent crossing point, or two adjacent arrows. Now a \emph{coloring} of $D$ with respect to $(X, f)$ assigns each semi-arc of $D$ an element of $X$, such that the conditions depicted in Figure \ref{figure4} are satisfied. Let us use Col$_{(X, f)}(D)$ to denote the number of colorings of $D$. The following theorem tells us that for some particular $f\in$ Aut$(X)$, Col$_{(X, f)}(D)$ gives rise to a coloring invariant for links in $\Sigma_g\times S^1$.

\begin{figure}[h]
\centering
\includegraphics{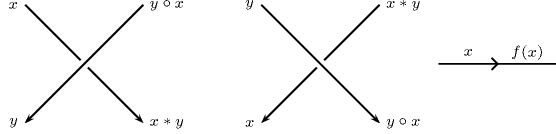}\\
\caption{Coloring rules}\label{figure4}
\end{figure}

\begin{theorem}\label{theorem3.1}
Let $X$ be a finite biquandle, $f\in$ Aut$(X)$ an automorphism of $X$ satisfying $x\ast y=x\circ f(y)$ for any $x, y\in X$, then Col$_{(X, f)}(D)$ does not depend on the choice of $D$.
\end{theorem}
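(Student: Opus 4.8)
The plan is to verify that $\mathrm{Col}_{(X,f)}(D)$ is unchanged under each of the moves listed in the Dabkowski--Mroczkowski theorem: the three classical Reidemeister moves and the two arrow moves $\Omega_4$ and $\Omega_5$. For the ordinary Reidemeister moves, the argument is exactly the standard biquandle calculation, since these moves involve no arrows: one checks that the three axioms in Definition \ref{definition} put the incoming and outgoing colorings of the local tangle into bijection. Axiom (2) (invertibility of the sidewaysoperator and the uniqueness of solutions to $z\ast x=y$, $w\circ x=y$) handles the Reidemeister I and II moves, and axiom (3) handles Reidemeister III; I would simply recall this and focus the real work on the arrow moves, where the hypothesis $x\ast y=x\circ f(y)$ enters.

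The heart of the proof is the bijection of colorings across $\Omega_4$ and $\Omega_5$. First I would fix the coloring convention at an arrow: crossing an arrow in the direction it points should apply $f$ to the color of the semi-arc, and crossing against the arrow should apply $f^{-1}$, so that a semi-arc entering an arrow with color $x$ exits with color $f(x)$ (and vice versa). With this convention, $\Omega_4$ says that an arrow may be slid past a crossing; the coloring constraint on each side is an equation relating the four colors meeting at the crossing, and the claim reduces to checking that sliding the arrow through reproduces the same constraint after the appropriate application of $f$ or $f^{-1}$. This is where the identity $x\ast y=x\circ f(y)$ is designed to be used: it lets one rewrite a $\ast$-relation as a $\circ$-relation with an $f$ inserted on the second argument, which is precisely the algebraic effect of pushing an arrow across an over/under strand. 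I would write out the two strand orientations and the two arrow directions as separate cases and check that each side of $\Omega_4$ yields the same system of equations. The $\Omega_5$ move (which slides an arrow along a strand past another arrow, or cancels a pair) is verified similarly, using that $f$ is a bijection so that $f$ and $f^{-1}$ compose correctly; because $f$ is a biquandle \emph{automorphism}, it commutes with both operations, which is what guarantees the colorings on the two sides correspond.

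The main obstacle I anticipate is the bookkeeping in $\Omega_4$: one must track both the orientation of each strand and the direction of the arrow relative to that orientation, and confirm in every combination that the single hypothesis $x\ast y=x\circ f(y)$ (together with its consequences obtained by applying $f^{-1}$ or by using the inverse operations from axiom (2)) forces the local colorings to match. In particular I would need to derive the companion identities, such as the behavior of $\ast$ and $\circ$ under $f$ and the dual relation expressing $\circ$ in terms of $\ast$, and confirm they are all consequences of the stated hypothesis rather than independent assumptions. Once the arrow-crossing convention and these derived identities are pinned down, each move becomes a finite case check exhibiting an explicit bijection between colorings before and after the move, and since every move preserves the count $\mathrm{Col}_{(X,f)}(D)$, the number is an invariant of the link.
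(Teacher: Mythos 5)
Your overall skeleton matches the paper's: check invariance under the three Reidemeister moves by the standard biquandle argument, then handle $\Omega_4$ and $\Omega_5$ using bijectivity of $f$ and the hypothesis $x\ast y=x\circ f(y)$ respectively. The gap is in what you take the moves $\Omega_4$ and $\Omega_5$ to be, and this is not mere relabelling. In the Dabkowski--Mroczkowski calculus, $\Omega_4$ is the cancellation of two adjacent, oppositely directed arrows on a single arc (this is the move that needs only that $f$ is a bijection, exactly as you argue for your ``$\Omega_5$''), while $\Omega_5$ replaces a strand passing \emph{under} another strand by that strand detouring once around the $S^1$ fiber and passing \emph{over} it, so that the crossing changes sign and acquires an arrow on each side of it on the moving strand. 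The move you describe as ``$\Omega_4$'' --- sliding a single arrow past a crossing along one strand --- is not one of the generating moves, and your proposed verification of it would in fact fail: with the arrow before the crossing the over-strand exits with color $y\circ f(x)$, and with the arrow after it exits with $y\circ x$, and these do not agree in general, nor does the hypothesis repair this.

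Your algebraic instinct is correct --- $x\ast y=x\circ f(y)$ is precisely the statement that ``being the $\ast$-strand at a crossing with $y$'' is the same as ``being the $\circ$-strand at a crossing with $f(y)$'' --- but it must be applied to the over/under exchange of $\Omega_5$, not to a one-sided arrow slide. Concretely, the paper's verification consists of the two identities $y\ast f^{-1}(x)=y\circ f(f^{-1}(x))=y\circ x$ and $f(f^{-1}(x)\circ y)=x\circ f(y)=x\ast y$, which say exactly that the output colors of the under-crossing configuration coincide with those of the arrow--over-crossing--arrow configuration. To repair your write-up you should correct the description of the two arrow moves (consulting the figures or \cite{DM2009}) and then carry out this two-line computation in each of the two cases of $\Omega_5$; the rest of your argument (Reidemeister moves, and arrow cancellation via $f\circ f^{-1}=\mathrm{id}$) is fine and agrees with the paper.
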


\begin{proof}
The three axioms in the definition of biquandle guarantee that Col$_{(X, f)}(D)$ is invariant under the three Reidemeister moves. It is sufficient to consider the local moves $\Omega_4$ and $\Omega_5$.

For the local move $\Omega_4$, since $f$ is a bijection from $X$ to it self, one observes that there is a one-to-one correspondence between the colorings of the diagram before and after $\Omega_4$. See Figure \ref{figure5}. It follows that Col$_{(X, f)}(D)$ is invariant under $\Omega_4$.

\begin{figure}[h]
\centering
\includegraphics{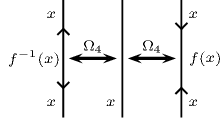}\\
\caption{Invariance under $\Omega_4$}\label{figure5}
\end{figure}

For the local move $\Omega_5$, notice that 
$$y\ast f^{-1}(x)=y\circ f(f^{-1}(x))=y\circ x,$$ 
and 
$$f(f^{-1}(x)\circ y)=x\circ f(y)=x\ast y.$$
It is not difficult to observe that $\Omega_5$ also preserves Col$_{(X, f)}(D)$.

\begin{figure}[h]
\centering
\includegraphics{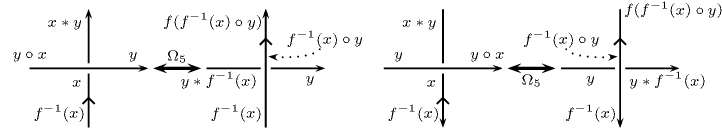}\\
\caption{Invariance under $\Omega_5$}\label{figure6}
\end{figure}
\end{proof}

Since the nonnegative integer Col$_{(X, f)}(D)$ does not depend on the choice of the diagram $D$, it defines a link invariant for links in $\Sigma_g\times S^1$. Let us use Col$_{(X, f)}(L)$ to denote it. Here we have some examples.

\begin{example}\label{example3.2}
Let us consider the two 2-component links $L_1, L_2$ in $T^2\times S^1$ depicted in Figure \ref{figure7}. Here we use the notation of virtual crossing point to denote the addition of a 1-handle to the 2-sphere where the link diagram lies in. In other words, these two link diagrams actually lie in $T^2$ and the virtual crossing points are not real. Let $X=\{1, 2, 3\}$ be the biquandle given in Example \ref{example2.4}, we choose an automorphism $f\in\text{Aut}(X)$ defined by $f(1)=2, f(2)=1, f(3)=3$. Direct calculation shows that Col$_{(X, f)}(L_1)=$ Col$_{(X, f)}(L_2)=5$. Thus both of them are nontrivial 2-component links in $T^2\times S^1$.

\begin{figure}[h]
\centering
\includegraphics{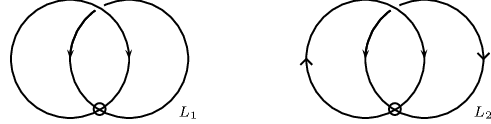}\\
\caption{Two 2-component links in $T^2\times S^1$}\label{figure7}
\end{figure}
\end{example}

\begin{example}\label{example3.3}
If $X$ is actually a quandle, in other words, for any $x, y\in X$ we have $x\circ y=x$, then it follows that $x\ast y=x\circ f(y)=x$. Hence $X$ is a trivial quandle of order $n$, if $|X|=n$. For arbitrary chosen $f\in S_n$, the symmetric group of degree $n$, then $f$ defines an automorphism of $X$ and the extra condition $x\ast y=x\circ f(y)$ in Theorem \ref{theorem3.1} is automatically satisfied. We decompose $X$ into orbits for the operation $f$, say $X=\bigcup\limits_{1\leq i\leq k}X_i$. Let $p: \Sigma_g\times S^1\to S^1$ be the projection from $\Sigma_g\times S^1$ to $S^1$ and $[\alpha]\in H_1(S^1)$ be the generator of $H_1(S^1)$. For a knot $K$ in $\Sigma_g\times S^1$, if $p_{\ast}([K])=h[\alpha]$ and a semi-arc is colored by $x$, then we have $f^h(x)=x$. It follows that $x$ can exactly be chosen from those $X_i$ which satisfies $|X_i|$ divides $h$.  As a consequence, we have Col$_{(X, f)}(K)=\sum\limits_{|X_i||h}|X_i|$. In particular, Col$_{(X, id)}(K)=n$ for any knot $K$ in $\Sigma_g\times S^1$.
\end{example}

\begin{remark}
In \cite{KM2005}, Kauffman and Manturov introduced the notions of virtual quandle and virtual biquandle, which are designed to define enhanced coloring invariants for virtual knots. In their paper, a \emph{virtual biquandle} is defined to be a biquandle $X$ endowed with an automorphism $f$. Hence the pair $(X, f)$ appeared in Theorem \ref{theorem3.1} can be regarded as a special case of virtual biquandle.
\end{remark}

\begin{remark}
A closely related algebraic structure, called \emph{labeled quandle}, was introduced by Kim in \cite{Kim2021}. A labeled quandle is a set equipped with two families of binary operations $\ast_i, \circ_i$ $(i\in\mathbb{Z})$. The index $i$ comes from a labelling of the link diagram by considering the projection $p: \Sigma_g\times S^1\to S^1$. Coloring invariants derived from labeled quandle were also discussed in \cite{Kim2021}. However, no concrete example of labeled quandle was given.
\end{remark}

\begin{remark}
It is well known that for knots in $S^3$, the quandle coloring invariant Col$_{X}(K)$ counts the homomorphisms from the knot quandle $Q_K$ to $X$. For knots in $\Sigma_g\times S^1$, Gabai proved that knots are determined by their complements when $g=0$ or 1 \cite{Gab1987}. Very recently, the general case for $g\geq2$ was confirmed by Cremaschi and Yarmola in \cite{CY2024}. It is an interesting question to find a similar interpretation for the coloring invariant Col$_{(X, f)}(K)$ introduced here.
\end{remark}

\section{Two extensions of coloring invariants}\label{section4}
\subsection{Biquandle 2-cocycle invariants}
An important extension of quandle coloring invariants comes from the (co)homology theory of quandles. This kind of (co)homology theory was first introduced for racks in \cite{FR1992} and later a modified version was introduced for quandles in \cite{CJKLS2003}. Roughly speaking, for a given quandle 2-cocycle and a fixed quandle coloring, one can assign a Boltzmann weight to each crossing point. Taking the product of Boltzmann weights for all crossing points, then the sum of all these products among all the colorings gives rise to a state-sum invariant for knots, called the \emph{quandle cocycle invariant}. The quandle coloring invariants can be recovered from the quandle cocycle invariants if the quandle 2-cocycle is a coboundary. For biquandles, Carter, Elhamdadi and Saito introduced a homology theory for the set-theoretic Yang-Baxter equations in \cite{CES2004}. The state-sum invariants can be similarly defined by using biquandle cocycles.

Now we take a quick review of the definition of biquandle cohomology groups. For a given biquandle $X$, let $C_n^{BR}(X)$ be the free abelian group generated by $n$-tuples $(x_1, \cdots, x_n)$, where $x_i\in X$. We define $d_n(x_1, \cdots, x_n)=$
\begin{center}
$\sum\limits_{i=1}^n(-1)^i((x_1, \cdots, x_{i-1}, x_{i+1}, \cdots, x_n)-(x_1\ast x_i, \cdots, x_{i-1}\ast x_i, x_{i+1}\circ x_i, \cdots, x_n\circ x_i))$ 
\end{center}
and extend it linearly to obtain a map $d_n: C_n^{BR}(X)\to C_{n-1}^{BR}(X)$. In particular, when $n\leq1$ we set $d_n=0$. It is a routine exercise to verify that $d_{n-1}\circ d_n=0$, thus $C_{\ast}^{BR}(X)=\{C_n^{BR}(X), d_n\}$ is a chain complex and the homology groups are called the \emph{birack homology groups}.

In order to define the biquandle homology groups, we need to consider the chain complex $C_{\ast}^{BQ}(X)=\{C_n^{BQ}(X), d_n\}$, where $C_n^{BQ}(X)=C_n^{BR}(X)/C_n^{BD}(X)$. Here $C_n^{BD}(X)$ denotes the free abelian group generated by $n$-tuples $(x_1, \cdots, x_n)$ where $x_i=x_{i+1}$ for some $1\leq i\leq n-1$. It turns out that $C_{\ast}^{BD}(X)=\{C_n^{BD}(X), d_n\}$ is a subcomplex of $C_{\ast}^{BR}(X)$ and the \emph{$n$-th biquandle homology group} and \emph{$n$-th biquandle cohomology group} are defined as
\begin{center}
$H_n^{BQ}(X)=H_n(C_{\ast}^{BQ}(X))$ and $H_{BQ}^n(X)=H^n(C_{BQ}^{\ast}(X))$,
\end{center}
where $C_{BQ}^{\ast}(X)=\text{Hom}(C_{\ast}^{BQ}(X), \mathbb{Z})$. For an abelian group $A$, the homology $H_n^{BQ}(X; A)$ and cohomology $H_{BQ}^n(X; A)$ with coefficient group $A$ can be similarly defined.

Let $X$ be a finite biquandle, $A$ an abelian group, $\phi$ a biquandle 2-cocycle and $f\in$ Aut$(X)$ an automorphism of $X$ satisfying $x\ast y=x\circ f(y)$ for any $x, y\in X$. For a link $L$ in $\Sigma_g\times S^1$, choose a link diagram $D$ of it. For a fixed $X$-coloring $\rho$ of $D$, consider a crossing points $c$ as in Figure \ref{figure1} and assign a Boltzmann weight $W_{\phi}^{\rho}(c)=\phi(x, y)^{w(c)}\in A$ to it. Here $w(c)$ denotes the sign of $c$.  We associate an element $\Phi_{\phi}^f(D)$ of the group ring $\mathbb{Z}[A]$ to $D$ as follows
\begin{center}
$\Phi_{\phi}^f(D)=\sum\limits_{\rho}\prod\limits_cW_{\phi}^{\rho}(c)\in\mathbb{Z}[A]$.
\end{center}

The following theorem suggests that for some suitable choice of $\phi$, $\Phi_{\phi}^f(D)$ does not depend on the choice of $D$. Hence it defines a link invariant for links in $\Sigma_g\times S^1$, we call it the \emph{biquandle cocycle invariant} of $L$ with respect to $f$ and $\phi$ and denote it by $\Phi_{\phi}^f(L)$.

\begin{theorem}\label{theorem4.1}
Let $X$ be a finite biquandle, $A$ be an abelian group, $f\in$ Aut$(X)$ be an automorphism of $X$ satisfying $x\ast y=x\circ f(y)$ for any $x, y\in X$, and $\phi$ be a biquandle 2-cocycle satisfying $\phi(x, y)\phi(y, f^{-1}(x))=1$ for any $x, y\in X$, then $\Phi_{\phi}^f(D)$ does not depend on the choice of $D$.
\end{theorem}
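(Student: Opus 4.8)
The plan is to follow the strategy of the proof of Theorem \ref{theorem3.1}, upgrading each coloring correspondence there to an identity of Boltzmann weight products here. For corresponding colorings under any local move, the crossings lying outside the local disk are unchanged and carry the same colors, so their Boltzmann weights agree automatically; only the crossings inside the disk must be examined. Working in the biquandle complex $C_\ast^{BQ}(X)=C_\ast^{BR}(X)/C_\ast^{BD}(X)$, the standard theory of biquandle $2$-cocycles already guarantees that $\Phi_\phi^f(D)$ is invariant under the three Reidemeister moves: the cocycle condition is exactly what the third move requires, the second move produces two crossings of opposite sign whose Boltzmann weights cancel, and passing to the quotient by the degenerate subcomplex $C_\ast^{BD}(X)$ (which forces $\phi(x,x)=1$) disposes of the first move. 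It therefore remains to treat $\Omega_4$ and $\Omega_5$.

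For $\Omega_4$ I would reuse verbatim the bijection on colorings supplied by Theorem \ref{theorem3.1}. Since the local picture for $\Omega_4$ contains no crossing, no Boltzmann weight is created, destroyed, or altered inside the disk, and the colors on its boundary are preserved by the correspondence. Thus each coloring $\rho$ and its partner have identical products $\prod_c W_\phi^\rho(c)$, and summing over colorings leaves $\Phi_\phi^f$ unchanged.

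The heart of the argument is $\Omega_5$, the only move whose local disk contains a double point. Geometrically the arrow marks the point where a strand passes through the level $\Sigma_g\times\{0\}$, so pushing the arrow through a crossing reverses the relative heights of the two strands and hence interchanges which of them is the overpass. Consequently the crossing reverses its sign, while its two inputs are exchanged and one of them is twisted by $f^{-1}$. Using the identities $y\ast f^{-1}(x)=y\circ x$ and $f(f^{-1}(x)\circ y)=x\ast y$ recorded in the proof of Theorem \ref{theorem3.1}, one checks that a crossing colored $(x,y)$ with sign $w$ before the move becomes a crossing colored $(y,f^{-1}(x))$ with sign $-w$ after it. Its Boltzmann weight thus passes from $\phi(x,y)^{w}$ to $\phi(y,f^{-1}(x))^{-w}$, and the hypothesis $\phi(x,y)\,\phi(y,f^{-1}(x))=1$ gives $\phi(y,f^{-1}(x))^{-w}=\phi(x,y)^{w}$. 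Hence this crossing's weight, and with it the whole product $\prod_c W_\phi^\rho(c)$, is preserved.

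The main obstacle will be the bookkeeping in this last step. After fixing an orientation and an arrow convention, one must determine exactly how the overpass and the two colors are permuted and rescaled by $f$ as the arrow crosses the double point, and verify that the sign is genuinely reversed, so that the relation invariance demands is precisely $\phi(x,y)\,\phi(y,f^{-1}(x))=1$ rather than a variant such as $\phi(x,y)=\phi(y,f^{-1}(x))$. Once the colors and signs on the two sides of $\Omega_5$ are matched, summing the equal weight products over the coloring correspondence of Theorem \ref{theorem3.1} gives invariance under $\Omega_5$; combined with the Reidemeister and $\Omega_4$ cases, this shows that $\Phi_\phi^f(D)$ does not depend on $D$.
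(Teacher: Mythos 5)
Your proposal follows essentially the same route as the paper's proof: invariance under the Reidemeister moves from the standard biquandle 2-cocycle conditions (with $\phi(x,x)=1$ handling the first move and sign cancellation handling the second), triviality of $\Omega_4$ since its local disk contains no crossing, and for $\Omega_5$ the observation that the crossing's color pair $(x,y)$ becomes $(y,f^{-1}(x))$ with reversed sign, so that the hypothesis $\phi(x,y)\,\phi(y,f^{-1}(x))=1$ gives equality of the Boltzmann weights $\phi(x,y)^{w}$ and $\phi(y,f^{-1}(x))^{-w}$. This is exactly the computation the paper performs for the two cases of Figure \ref{figure6}, so the argument is correct as proposed.
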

\begin{proof}
By definition, $\phi$ is a biquandle 2-cocycle if it satisfies
\begin{enumerate}
  \item $\phi(x, x)=1$ for any $x\in X$;
  \item $\phi(x, y)\phi(y, z)\phi(x\ast y, z\circ y)=\phi(x\ast z, y\ast z)\phi(y\circ x, z\circ x)\phi(x, z)$ for any $x, y, z\in X$.
\end{enumerate}
In order to prove the theorem, it suffices to show that for a fixed coloring $\rho$, $\prod\limits_cW_{\phi}^{\rho}(c)$ is invariant under the Reidemeister moves and local moves $\Omega_4$ and $\Omega_5$. 

For the first Reidemeister move and the third Reidemeister move, it is well known that the two properties (1) and (2) above guarantee the invariance of $\prod\limits_cW_{\phi}^{\rho}(c)$. Since the two crossing points involved in the second Reidemeister move have opposite signs, the contributions from these two crossing points to $\prod\limits_cW_{\phi}^{\rho}(c)$ cancel out.

For the local move $\Omega_4$, it has no effect on the Boltzmann weights of crossing points of $D$. Thus it preserves $\prod\limits_cW_{\phi}^{\rho}(c)$.

For the local move $\Omega_5$, let us consider the two cases depicted in Figure \ref{figure6}. For the first case, the Boltzmann weight of the left crossing point equals $\phi(x, y)$ and that of the right crossing point equals $\phi(y, f^{-1}(x))^{-1}$, they are equal according to the assumption $\phi(x, y)\phi(y, f^{-1}(x))=1$. For the second case, one observes that the Boltzmann weights of the two crossing points are $\phi(x, y)^{-1}$ and $\phi(y, f^{-1}(x))$ respectively, which also coincide.
\end{proof}

Obviously, if one sets $\phi(x, y)=1$ for any $x, y\in X$, then $\Phi_{\phi}^f(K)$ reduces to the coloring invariant Col$_{(X, f)}(K)$.

\begin{example}\label{example4.2}
Consider the two links $L_1, L_2$, the biquandle $X$ and the automorphism $f\in\text{Aut}(X)$ given in Example \ref{example3.2}, let us choose an infinite cyclic group $A$ generated by $a$ and a biquandle 2-cocycle $\phi$ defined by
\begin{center}
$\phi(1, 1)=\phi(2, 2)=\phi(3, 3)=\phi(1, 2)=\phi(2, 1)=1$,
\end{center}
and
\begin{center}
$\phi(1, 3)=\phi(2, 3)=a, \phi(3, 2)=\phi(3, 1)=a^{-1}$.
\end{center}
It is easy to see that $\phi$ satisfies the extra condition required in Theorem \ref{theorem4.1}. Direct calculation shows that
\begin{center}
$\Phi_{\phi}^f(L_1)=2a+2a^{-1}+1$ and $\Phi_{\phi}^f(L_2)=5$.
\end{center}
Comparing with the result given in Example \ref{example3.2}, we find that the biquandle cocycle invariants are stronger than the biquandle coloring invariants.
\end{example}

For classical knots or virtual knots, it is well known that if the 2-cocycle $\phi$ is a coboundary, then the cocycle invariant reduces to the coloring invariant. In our case, if $\phi=\delta_1\psi$ for some $\psi\in C^1_{BQ}(X)$, then 
\begin{center}
$\phi(x, y)=\delta_1\psi(x, y)=\psi d_2(x, y)=\psi(y)^{-1}\psi(y\circ x)\psi(x)\psi(x\ast y)^{-1}$.
\end{center}
For a fixed coloring $\rho$, the Boltzmann weight of each crossing point splits into four terms assigned to the four semi-arcs around it. If a semi-arc contains no arrow, then the contributions received by this semi-arc from its two ends cancel out. Hence if the arrowed diagram actually contains no arrow, the product $\prod\limits_cW_{\phi}^{\rho}(c)=1$. If the arrowed diagram contains some arrows, notice that 
\begin{center}
$1=\phi(x, y)\phi(y, f^{-1}(x))=\delta_1\psi(x, y)\delta_1\psi(y, f^{-1}(x))$,
\end{center}
which implies 
\begin{center}
$1=\psi(y)^{-1}\psi(y\circ x)\psi(x)\psi(x\ast y)^{-1}\psi(f^{-1}(x))^{-1}\psi(f^{-1}(x)\circ y)\psi(y)\psi(y\ast f^{-1}(x))^{-1}$.
\end{center}
Together with the assumption that $y\ast f^{-1}(x)=y\circ x$, it follows that 
\begin{center}
$\psi(x)\psi(x\ast y)^{-1}\psi(f^{-1}(x))^{-1}\psi(f^{-1}(x)\circ y)=1$.
\end{center}
Or equivalently, 
\begin{center}
$\psi(x)\psi(f^{-1}(x))^{-1}=\psi(x\ast y)\psi(f^{-1}(x)\circ y)^{-1}$.
\end{center}
Since the elements $x, y$ can be arbitrarily chosen and $f$ is a bijection, the equality above can be rewritten as
\begin{center}
$\psi(f(x))\psi(x)^{-1}=\psi(f(x)\ast f(y))\psi(x\circ f(y))^{-1}=\psi(f(x\ast y))\psi(x\ast y)^{-1}$,
\end{center}
or
\begin{center}
$\psi(f(x))\psi(x)^{-1}=\psi(f(x)\ast y)\psi(x\circ y)^{-1}=\psi(f(x\circ y))\psi(x\circ y)^{-1}$.
\end{center}
Similar to the notion of Latin quandle, we say a biquandle $X$ is \emph{Latin} if for any $x\in X$, the maps $x\ast\bullet : X\to X$ and $x\circ\bullet : X\to X$ are both bijective. If either one of these two maps is bijective, then we say $X$ is \emph{semi-Latin}. Now assume we are given a finite semi-Latin biquandle $X$, then the two equalities above suggest that for any $x, y\in X$, we have 
\begin{center}
$\psi(f(x))\psi(x)^{-1}=\psi(f(y))\psi(y)^{-1}$.
\end{center}
That is to say, for any $x\in X$, $\psi(f(x))\psi(x)^{-1}$ is a constant element of the abelian group $A$. Let us use $a$ to denote it. Similar to Example \ref{example3.3}, we use $p: \Sigma_g\times S^1\to S^1$ to denote the projection from $\Sigma_g\times S^1$ to $S^1$ and $[\alpha]\in H_1(S^1)$ be the generator of $H_1(S^1)$. For an oriented link $L$ in $\Sigma_g\times S^1$, if $p_{\ast}([L])=k[\alpha]$ and the 2-cocycle $\phi$ is a coboundary, then we have $\Phi_{\phi}^f(L)=\sum\limits_{\rho}a^k$. When $k=0$, this recovers the corresponding result of classical knots.

\subsection{Index type invariants}
In this subsection, we discuss another application of the quandle coloring invariants. Roughly speaking, what we want to do is to introduce an enhanced version of the writhe of a link diagram. Recall that for a given link diagram $D$, the writhe of $D$ is the sum of the signs among all the crossing point. This is not a link invariant in general, since it is not preserved under the first Reidemeister move. In order to overcome this, we can assign an index (e.g. an integer, a polynomial or an element of a group, etc) to each crossing point of $D$ and the writhe now turns into a weighted sum. The requirements of the index that make this weighted sum becomes a link invariant were first introduced in \cite{Che2021} for virtual knots. Here we present a modified version as follows.

\begin{definition}\label{definition4.4}
An \emph{index} is an assignment which assigns an index to each crossing point of a link diagram, such that
\begin{enumerate}
  \item the index of the crossing involved in the first Reidemeister move is fixed;
  \item the indices of the two crossings involved in the second Reidemeister move are the same;
  \item the indices of the three crossings involved in the third Reidemeister move are preserved respectively;
  \item the index of any crossing which does not appear in a Reidemeister move is preserved under this Reidemeister move.
\end{enumerate}
\end{definition}

Index type invariants of virtual knots have been intensively studied during the past ten years. The readers are referred to \cite{CFGMX2020} for a survey of some recent progress on this topic. 

Let $L$ be link in $\Sigma_g\times S^1$ and $D$ a link diagram of it. Assume we are given a finite biquandle $X$ and an automorphism $f\in$ Aut$(X)$ satisfying $x\ast y=x\circ f(y)$ for all $x, y\in X$, a concrete example of index can be constructed as follows. Consider the group
\begin{center}
$G_X=<(x, y)\in X\times X|(x, x)=(x, y)(y, f^{-1}(x))=1, (x, y)=(x\ast z, y\ast z), (y, z)=(y\circ x, z\circ x), (x, z)=(x\ast y, z\circ y)>$.
\end{center}
Note that in $G_X$ we also have
\begin{center}
$(x\circ y, z\ast y)=((x\circ y)\ast^{-1}y, z)=((x\circ y)\ast^{-1}y, (z\circ y)\circ^{-1}y)=(x\circ y, z\circ y)=(x, z)$.
\end{center}
On the other hand, for a given abelian group $A$, any homomorphism $\phi$ from $G_X$ to $A$ satisfies the biquandle 2-cocycle condition mentioned in Theorem \ref{theorem4.1}. Furthermore, for any such biquandle 2-cocycle $\phi$, not only the product of the Boltzmann weights of the three crossings involved in the third Reidemeister move is preserved, actually, now the Boltzmann weight of each crossing is preserved under the third Reidemeister move. 

For a fixed coloring of $D$ by $X$, say $\rho$, we associate a weight $w_{\rho}(c)=(x, y)\in G_X$ to each of the two crossing points in Figure \ref{figure1}. Now we define the \emph{index} (associated with $X$ and $f$) of a crossing point $c$ to be Ind$(c)=\sum_{\rho}w_{\rho}(c)\in\mathbb{Z}G_X$. In particular, if there exists a homomorphism $\phi$ from $G_X$ to an abelian group $A$, then we obtain an induced index $\phi(\sum_{\rho}w_{\rho})\in\mathbb{Z}A$. Now for any element $g\in\mathbb{Z}G_X$, we define
\begin{center}
$a_g(D)=
\begin{cases}
\sum\limits_{\text{Ind}(c)=g}w(c),& \text{if }g\neq\sum\limits_{\rho}1;\\
\sum\limits_{\text{Ind}(c)=g}w(c)-w(D),& \text{if }g=\sum\limits_{\rho}1.
\end{cases}$
\end{center}
Here $w(D)$ denotes the writhe of $D$.

\begin{theorem}
For any $g\in\mathbb{Z}G_X$, the integer $a_g(D)$ does not depend on the choice of $D$, hence it defines a link invariant $a_g(L)$ for links in $\Sigma_g\times S^1$.
\end{theorem}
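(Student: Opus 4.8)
The plan is to verify that $a_g(D)$ is unchanged under each of the generating moves: the three Reidemeister moves and the two arrow moves $\Omega_4$ and $\Omega_5$. The work done just before the statement shows that $c\mapsto\mathrm{Ind}(c)$ is an \emph{index} in the sense of Definition \ref{definition4.4} and that, thanks to the relations defining $G_X$, the weight $w_\rho(c)\in G_X$ (not merely the product over all crossings) is preserved crossing-by-crossing under the third Reidemeister move. I would single out the element $\mathbf{1}:=\sum_\rho 1\in\mathbb{Z}G_X$, which is the value of $\mathrm{Ind}$ on the crossing created by a first Reidemeister move, since there $w_\rho$ equals $(x,x)=1$ for every coloring $\rho$; this is exactly the bucket carrying the writhe correction in the definition of $a_g$.

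For the first Reidemeister move, the new kink contributes a single crossing of index $\mathbf{1}$ and sign $\pm1$. Both $\sum_{\mathrm{Ind}(c)=\mathbf1}w(c)$ and the writhe $w(D)$ change by this same $\pm1$, so $a_{\mathbf1}(D)$ is unchanged while every other $a_g(D)$ is untouched. For the second Reidemeister move, the two crossings that appear share a common index (Definition \ref{definition4.4}(2)) and carry opposite signs, hence their contributions cancel in every bucket and leave $w(D)$ fixed. For the third Reidemeister move, each of the three crossings keeps both its index and its sign, and the remaining crossings are unaffected, so no bucket and no sign changes. The move $\Omega_4$ involves no crossing at all; since $f$ is a bijection the colorings before and after are in canonical bijection (as in the proof of Theorem \ref{theorem3.1}), so every $\mathrm{Ind}(c)$, every $w(c)$ and $w(D)$ are literally unchanged. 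In all of these cases $a_g(D)$ is preserved for every $g$.

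The main obstacle is the move $\Omega_5$, which is the only move that acts nontrivially on the index of a persisting crossing. Here a single crossing is affected, and the computation recorded in the proof of Theorem \ref{theorem4.1}, together with the relation $(x,y)(y,f^{-1}(x))=1$ in $G_X$, shows that, coloring by coloring, its weight $(x,y)$ is replaced by $(y,f^{-1}(x))=(x,y)^{-1}$ while its sign is reversed. Thus $\Omega_5$ sends the pair $(\mathrm{Ind}(c),w(c))$ to $(\iota(\mathrm{Ind}(c)),-w(c))$, where $\iota$ denotes the linear extension of group inversion to $\mathbb{Z}G_X$. The crux is to verify that these two simultaneous effects leave every $a_g(D)$ unchanged. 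For the trivial bucket this is immediate, since $\iota(\mathbf1)=\mathbf1$ and the sign reversal is absorbed by the writhe term exactly as in the first Reidemeister move; the part I expect to demand the most careful bookkeeping is reconciling the signed counts of the buckets $g$ and $\iota(g)$ with the sign reversal, using the coloring bijection induced by $\Omega_5$ together with the involution $\iota$. Once this compatibility is established, invariance under all generating moves follows and $a_g(D)$ descends to a well-defined link invariant $a_g(L)$.
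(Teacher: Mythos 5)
Your handling of the three Reidemeister moves and of $\Omega_4$ is correct and matches the paper. The genuine gap is exactly at the point you flag and then defer: the $\Omega_5$ case. You correctly compute that $\Omega_5$ replaces the weight $(x,y)$ of the affected crossing by $(y,f^{-1}(x))=(x,y)^{-1}$ in $G_X$ while reversing its sign, i.e.\ it sends $(\mathrm{Ind}(c),w(c))$ to $(\iota(\mathrm{Ind}(c)),-w(c))$. But the ``careful bookkeeping'' you postpone does not close: writing $g_0=\mathrm{Ind}(c)$ and $\epsilon=w(c)$, after the move the crossing contributes $-\epsilon$ to the bucket $\iota(g_0)$ instead of $+\epsilon$ to the bucket $g_0$, and $w(D)$ decreases by $2\epsilon$. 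Hence $a_{g_0}$ changes by $-\epsilon$, $a_{\iota(g_0)}$ changes by $-\epsilon$, and $a_{\mathbf 1}$ (through the writhe correction) changes by $+2\epsilon$; when $g_0$, $\iota(g_0)$ and $\mathbf 1$ are pairwise distinct, none of these changes is zero and they do not cancel against one another. Only $\iota$-symmetrized combinations such as $a_g-a_{\iota(g)}$ survive this computation. So the step you identify as the crux is not unfinished bookkeeping; as you have set it up, it fails for any bucket $g$ with $g\neq\iota(g)$, and such buckets do occur (Example \ref{example4.7} produces $1+2a$ and $1+2a^{-1}$ with $G_X=\mathbb{Z}$).

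For comparison, the paper's own proof dismisses $\Omega_5$ in one sentence by asserting that the index of the affected crossing is \emph{preserved}, ``directly from $(x,y)(y,f^{-1}(x))=1$.'' That relation gives $(y,f^{-1}(x))=(x,y)^{-1}$, which is precisely your computation; it yields $\mathrm{Ind}(c)\mapsto\iota(\mathrm{Ind}(c))$, not $\mathrm{Ind}(c)\mapsto\mathrm{Ind}(c)$. So at the decisive step your analysis is more careful than the paper's, and what you have uncovered is a difficulty the paper glosses over rather than one you merely failed to resolve. To produce a correct proof along these lines you would need either an argument that $\mathrm{Ind}(c)$ is always fixed by $\iota$ (false in general, as above), or a reformulation of the statement in terms of $\iota$-invariant quantities; note that redefining the weight as $(x,y)^{w(c)}$ makes the index survive $\Omega_5$ but the sign reversal still changes $a_{g_0}$ by $-2\epsilon$, and it also destroys the cancellation in the second Reidemeister move. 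As written, neither your argument nor the paper's establishes invariance of each individual $a_g$ under $\Omega_5$.
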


\begin{proof}
The result mainly follows from the fact that Ind$(c)$ satisfies all the conditions mentioned in Definition \ref{definition4.4}, which can be verified straightforwardly. As a consequence, $a_g(D)$ is invariant under the three classical Reidemeister moves. Obviously, the local move $\Omega_4$ has no effect on the index of each crossing point, hence also the integer $a_g(D)$. The invariance of the index of the crossing point involved in $\Omega_5$ follows directly from the fact $(x, y)(y, f^{-1}(x))=1$ assumed in the definition of $G_X$. Therefore, the integer $a_g(D)$ is also preserved under $\Omega_5$.
\end{proof}

\begin{remark}
Although the group ring $\mathbb{Z}G_X$ is infinite, for each link $L\subset\Sigma_g\times S^1$, only finitely many $a_g(L)$ are nonzero.
\end{remark}

\begin{example}\label{example4.6}
Consider the two links $L_1, L_2$ discussed in Example \ref{example3.2}, the biquandle $X$ and the automorphism $f$ used in Example \ref{example4.2}. It is not difficult to find that $G_X=\mathbb{Z}$, which is generated by $a=(1, 3)=(2, 3)$. Then we have
\begin{center}
$a_g(L_1)=
\begin{cases}
-1,& \text{if }g=2a+2a^{-1}+1;\\
0,& \text{otherwise},
\end{cases}$
\end{center}
and
$a_g(L_2)=0$ for any $g\in\mathbb{Z}G_X$.
\end{example}

\begin{example}\label{example4.7}
Consider the 2-component link $L_3\subset S^2\times S^1$ depicted in Figure \ref{figure8}. We still use the same biquandle $X$ and automorphism $f$ as that in Example \ref{example4.2}. Now direct calculation shows that 
\begin{center}
$a_g(L_3)=
\begin{cases}
-1,& \text{if }g=1+2a;\\
-1,& \text{if }g=1+2a^{-1};\\
0,& \text{otherwise}.
\end{cases}$
\end{center}
We would like to remark that if we choose the biquandle 2-cocycle $\phi$ in Example \ref{example4.2}, the biquandle cocycle invariant $\Phi_{\phi}^f(L_3)$ equals 3, which contains no extra information comparing with the biquandle coloring invariant.
\begin{figure}[h]
\centering
\includegraphics{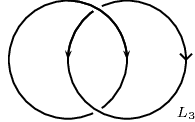}\\
\caption{A 2-component link $L_3\subset S^2\times S^1$}\label{figure8}
\end{figure}
\end{example}

\section*{Acknowledgement}
Part of this paper was written when the authors were visiting Guangxi Normal University in 2023. We wish to thank Mengjian Xu for useful conversations. The first author is grateful to Faze Zhang for informing him the paper \cite{Kim2021}. The authors were supported by the NSFC grant 12371065, NSFC grant 12071034 and NSFC grant 12271040.

\bibliographystyle{amsplain}
\bibliography{}
\begin{bibdiv}
\begin{biblist}
\bib{CJKLS2003}{article}{
AUTHOR = {Carter, J. S.},
author={Jelsovsky, D.},
author={Kamada, S.},
author={Langford, L.},
author={Saito, M.},
TITLE = {Quandle cohomology and state-sum invariants of knotted curves and surfaces},
JOURNAL = {Trans. Amer. Math. Soc.},
VOLUME = {355},
YEAR = {2003},
NUMBER = {10},
PAGES = {3947--3989}}

\bib{CES2004}{article}{
AUTHOR = {Carter, J. S.},
author={Elhamdadi, M.},
author={Saito, M.},
TITLE = {Homology theory for the set-theoretic Yang-Baxter equation and knot invariants from generalizations of quandles},
JOURNAL = {Fund. Math.},
VOLUME = {184},
YEAR = {2004},
PAGES = {31--54}}

\bib{Che2021}{article}{
AUTHOR = {Cheng, Z.},
TITLE = {The chord index, its definitions, applications, and generalizations},
JOURNAL = {Canad. J. Math.},
VOLUME = {73},
YEAR = {2021},
NUMBER = {3},
PAGES = {597--621}}

\bib{CFGMX2020}{article}{
AUTHOR = {Cheng, Z.},
author={Fedoseev, D.}
author={Gao, H.}
author={Manturov, V.}
author={Xu, M.}
TITLE = {From chord parity to chord index},
JOURNAL = {J. Knot Theory Ramifications},
VOLUME = {29},
YEAR = {2020},
NUMBER = {13},
PAGES = {2043004, 26}}

\bib{CY2024}{article}{
AUTHOR={Cremaschi, T.}
AUTHOR={Yarmola, A.},
TITLE={Knots in circle bundle are determined by their complements},
JOURNAL={arXiv:2401.02895}}

\bib{DM2009}{article}{
AUTHOR={Dabkowski, M. K.}
AUTHOR={Mroczkowski, M.},
TITLE={KBSM of the product of a disk with two holes and $S^1$},
JOURNAL={Topology Appl.},
VOLUME={156},
YEAR={2009},
NUMBER={10},
PAGES={1831--1849}}

\bib{FR1992}{article}{
AUTHOR = {Fenn, R.},
author={Rourke, C.},
TITLE = {Racks and links in codimension two},
JOURNAL = {J. Knot Theory Ramifications},
VOLUME = {1},
YEAR = {1992},
NUMBER = {4},
PAGES = {343--406}}

\bib{FJK2004}{article}{
AUTHOR = {Fenn, R.},
author={Jordan-Santana, M.},
author={Kauffman, L.},
TITLE = {Biquandles and virtual links},
JOURNAL = {Topology Appl.},
VOLUME = {145},
YEAR = {2004},
NUMBER = {1-3},
PAGES = {157--175}}

\bib{Gab1987}{article}{
AUTHOR = {Gabai, David},
TITLE = {Foliations and the topology of {$3$}-manifolds. {II}},
JOURNAL = {J. Differential Geom.},
VOLUME = {26},
YEAR = {1987},
NUMBER = {3},
PAGES = {461--478}}

\bib{Ish2020}{article}{
AUTHOR = {Ishikawa, K.},
TITLE = {Knot quandles vs. knot biquandles},
JOURNAL = {Internat. J. Math.},
 VOLUME = {31},
 YEAR = {2020},
 NUMBER = {2},
 PAGES = {2050015, 15}}

\bib{Joy1982}{article}{
AUTHOR = {Joyce, D.},
TITLE = {A classifying invariant of knots, the knot quandle},
JOURNAL = {J. Pure Appl. Algebra},
VOLUME = {23},
YEAR = {1982},
NUMBER = {1},
PAGES = {37--65}}

\bib{KM2005}{article}{
AUTHOR = {Kauffman, L.},
author={Manturov, V.}
TITLE = {Virtual biquandles},
JOURNAL = {Fund. Math.},
VOLUME = {188},
YEAR = {2005},
PAGES = {103--146}}

\bib{Kim2021}{article}{
AUTHOR = {Kim, S.},
TITLE = {On links in $S_g\times S^1$ and its invaraints},
JOURNAL = {arXiv:2104.08573v3}}

\bib{Mat1982}{article}{
AUTHOR = {Matveev, S. V.},
TITLE = {Distributive groupoids in knot theory},
JOURNAL = {Mat. Sb. (N.S.)},
VOLUME = {119(161)},
YEAR = {1982},
NUMBER = {1},
PAGES = {78--88, 160}}
\end{biblist}
\end{bibdiv}
\end{document}